\documentclass{article}

\usepackage{amsfonts}
\usepackage{amsthm}
\usepackage{amsmath}
\usepackage{amssymb}
\usepackage{mathtools} 
\usepackage{authblk} 
\usepackage{xcolor}
\usepackage{soul} 
\usepackage{enumerate}
\usepackage{float} 
\usepackage{hyperref} 

\usepackage[top=1in, bottom=1in, left=1in, right=1in]{geometry}

\newtheorem{theorem}{Theorem}[section]

\newtheorem{corollary}{Corollary}[section]
\newtheorem{proposition}{Proposition}[section]

\theoremstyle{definition}
\newtheorem{definition}{Definition}[section]

\theoremstyle{remark}
\newtheorem{remark}{Remark}[section]
\newtheorem{example}{Example}[section]

\begin{document}


\title{A Topological Proof for a Version of Artin's Induction Theorem}

\date{}


\author[1]{M\"uge Saadeto\u{g}lu\thanks{Corresponding author. Email: \texttt{muge.saadetoglu@emu.edu.tr}}}

\affil[1]{{\small Department of Mathematics, Eastern Mediterranean University, Famagusta, Northern Cyprus, via Mersin-10, Turkey}}


\maketitle

\vspace{-1cm}

\begin{abstract}
We define a Euler characteristic $\chi(X,G)$ for a finite cell complex $X$ with a finite group $G$ acting cellularly on it. Then, each $K_{i}(X)$ (a complex vector space with basis the $i$-cells of $X$) is a representation of $G$, and we define $\chi(X,G)$ to be the alternating sum of the representations $K_{i}(X)$, as elements of the representation ring $R(G)$ of $G$.  By adapting the ordinary proof that the
alternating sum of the dimensions of the chain complexes is equal to the alternating sum of the dimensions of the homology groups,
we prove that there is another definition of $\chi(X,G)$ with the alternating sum of the representations $H_i(X)$, again as elements of the representation ring $R(G)$.
We also show that the character of this virtual representation $\chi(X,G)$, with respect to a given element $g$, is just the ordinary Euler characteristic of the fixed-point set by this element. Finally, we give a topological proof of a version of
Artin's induction theorem. More precisely, we show that, if $G$ is a group with an irreducible representation of dimension greater than 1, then each character of $G$ is a linear combination with rational coefficients of characters induced up from characters of proper subgroups of $G$. \\

\noindent \textbf{Keywords:} irreducible representation; subrepresentation; character; homology; complex projective space \\

\end{abstract}

\section{Introduction}

The origin of the representation theory of finite
groups goes way back to a correspondence between R. Dedekind and F. G. Frobenius that took
place around a century ago~\cite{lam,conrad}. This theory studies abstract algebraic structures by representing their elements as linear transformations of vector spaces. Specifically, a~representation is a way of making an abstract algebraic object more solid by defining its elements by matrices and arithmetic operations on matrices. Linear algebra and matrix theory are well-known and less abstract, so understanding more abstract objects using familiar linear algebra objects can be helpful in deriving properties and simplifying calculations in more abstract settings. Representation theory is a branch of abstract mathematics with many important applications not only in mathematics but in other sciences as well, including chemistry, physics, computing and statistics. To~name a few examples: in~\cite{stursberg}, an~algorithm to construct skew-symmetric matrices is developed by using real irreducible representations of $SO(3)$ (application in Lie theory); in~\cite{dally}, a non-Abelian 4 dimensional unitary representation of the Braid group is used to obtain a totally leakage-free braiding (application in computer science); and in~\cite{chen}, applications of the group representation theory in various branches of physics and quantum chemistry, in~particular nuclear and molecular physics, are discussed. The~motivation for this work was to use representation theory to give a topological proof for an abstract problem.  
We explain the method of research below: for $X$, a finite cell complex, let $K_{*}(X)$ be the cellular chain
complex with complex coefficients where each of the $K_{i}(X)$s is
a complex vector space with the $i$-cells as a basis. Now, suppose a finite group $G$ acts cellularly on $X$. Then, each
$K_{i}(X)$ is a representation of $G$, and we can make a more
accurate Euler characteristic for $X$ with $G$ acting by taking $\chi(X,G) = \sum_{i=0}^n(-1)^i [K_i(X;\mathbb{C})]\in
R_\mathbb{C}(G).$ The dimension of this virtual representation is just the ordinary
Euler characteristic of $X$. We claim there is another definition of $\chi(X,G)$ as follows, $\chi(X,G) = \sum_{i=0}^n(-1)^i [H_i(X;\mathbb{C})]\in
R_\mathbb{C}(G)$, where $H_i(X)$ is the $i$th homology group of $X$ with $G$ acting on
it. In~this paper, we will work on the following problems. We will try to adapt the ordinary proof that
$\sum_{i}(-1)^i \textrm{dim}_\mathbb{C} K_i(X;\mathbb{C})= \sum_{i} (-1)^i
\textrm{dim}_\mathbb{C} H_i(X;\mathbb{C})$ to give a similar formula with equality as elements of
$R_\mathbb{C}(G)$ when $G$ acts on $X$. We will also look for a formula for the character
of $\chi(X,G)$ in terms of the ordinary Euler characteristic of $X^g
= \{ x \in X |\,gx = x \}$. Finally, with~the help of these results, we give a topological proof of a version of Artin's induction theorem. Artin's induction theorem says that any character of a finite group can be expressed as a rational linear combination of characters that are induced from the cyclic subgroups. In~this work, we prove a version of this theorem; more precisely, we show that if~$G$ is a group with an irreducible representation of dimension greater than 1, then each character of $G$ can be expressed as a rational linear combination of characters that are induced up from characters of proper subgroups of $G$.

The structure of the paper is as follows. Section~\ref{background} will give the preliminaries on representation theory and topology. Sections~\ref{sec6.2} and \ref{sec6.3} will give examples for the Euler characteristic computation using definition with the chain groups and the homology groups, respectively. Finally, we give the results in Section~\ref{sec6.4}.

This work is presented in International Congress of Mathematicians, ICM2014, which took place in Seoul, South Korea in~2014.

\section{Preliminaries}\label{background} 

Our main references here are~\cite{armstrong,kao,serre}. {Note that the underlying field is the field of complex numbers.}





\begin{definition}
Let $\varphi:G\rightarrow GL(V)$ be a linear representation, and~let $W$ be a subspace of $V$. Suppose that $W$ is invariant under the action of $G$, that is to say, suppose that $w\in W$ implies that $\varphi_s(w)\in W$ for all $s\in G$. The~restriction $\varphi^W$ of $\varphi_s$ to $W$ is then an isomorphism of $W$ onto itself. Thus, $\varphi^W:G\rightarrow GL(W)$ is a linear representation of $G$ in $W$ and~is said to be a subrepresentation of $G$.

\end{definition}

\begin{definition}
A {non-zero linear representation is said to be irreducible if it has no proper non-trivial subrepresentation.}
\end{definition}

\begin{definition}
Let $G$ be a finite group, let $H$ be a subgroup of index $n$, and~let $(\pi,V)$ be any representation of $H$. Let $x_1, x_2,…,x_n$  be representatives in $G$ of the cosets in $G/H$. The~induced representation ${Ind_H}^G\pi$ acts on $W=\bigoplus{x_iV}$,  where $i$ ranges over the coset representatives, and~via this, $G$ acts on $W$ as follows: $g.\sum x_iv_i=\sum x_{j(i)}\pi(h_i)v_i$ for $v_i\in V$.

\end{definition}

Below, we state some well-known theorems on representation theory that will be useful in the coming~sections.

\begin{theorem}[Fixed-Point Formula]\label{fpf}  Let $V$ be a representation of a finite group $G$, and~let $X$ be a finite $G$-set. Then, the number of left fixed elements (by the action of $g$) in $X$ is $\chi_V(g)$ for every $g\in G$.

\end{theorem}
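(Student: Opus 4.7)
The statement, as written, only makes sense if $V$ is interpreted as the permutation representation $\mathbb{C}[X]$ associated with the finite $G$-set $X$, that is, the complex vector space with basis $\{e_x : x \in X\}$ and $G$-action defined on basis vectors by $g \cdot e_x = e_{gx}$ (extended linearly). For an arbitrary representation $V$ unrelated to $X$, the conclusion would be meaningless. So the plan is, first, to identify $V$ with $\mathbb{C}[X]$, and then to reduce the claim to a direct computation of the trace in this distinguished basis.

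With this identification, I would fix an element $g \in G$ and write the matrix of $\varphi_g : V \to V$ in the basis $\{e_x : x \in X\}$. Because $g$ merely permutes this basis, the resulting matrix $M = (M_{x,y})_{x,y \in X}$ is a permutation matrix: $M_{x,y} = 1$ if $gy = x$, and $M_{x,y} = 0$ otherwise. Its diagonal entry $M_{x,x}$ is therefore $1$ precisely when $gx = x$ and $0$ otherwise.

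Summing the diagonal gives
\[
\chi_V(g) \;=\; \operatorname{tr}\bigl(\varphi_g\bigr) \;=\; \sum_{x \in X} M_{x,x} \;=\; \bigl|\{x \in X : gx = x\}\bigr| \;=\; |X^g|,
\]
which is exactly the claim. No further step is required, and the equality holds for every $g \in G$ by the same argument applied pointwise.

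There is no real obstacle to this proof; the only subtlety lies in recognising that the statement silently identifies $V$ with $\mathbb{C}[X]$, and the rest is the classical observation that the trace of a permutation matrix counts its fixed points. The significance of the lemma, rather than its difficulty, is what matters: it is the link that will later let us translate characters of virtual representations $\chi(X,G)$ into topological data (fixed-point sets), and so pave the way for the topological interpretation of characters used in the Artin-type theorem.
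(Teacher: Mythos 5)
Your proof is correct and is the standard argument: the paper states this Fixed-Point Formula without proof as a well-known preliminary (citing Serre and Kao), so there is no in-paper proof to diverge from, and your trace-of-a-permutation-matrix computation is exactly the classical one. Your observation that the statement only makes sense when $V$ is read as the permutation representation $\mathbb{C}[X]$ is also right, and it matches how the paper actually uses the result (computing the characters of $K_0$, $K_1$, $K_2$ as permutation representations on the cells).
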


\begin{theorem}
The number of conjugacy classes of $G$ is the same as the number of irreducible representations of $G$, up~to isomorphism.
\end{theorem}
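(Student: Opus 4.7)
The plan is to identify both counts with the dimension of the space of class functions on $G$. Let $C(G)$ denote the complex vector space of functions $f\colon G \to \mathbb{C}$ that are constant on each conjugacy class. Since such a function is determined by its values on a set of conjugacy class representatives, $\dim_{\mathbb{C}} C(G)$ equals the number of conjugacy classes of $G$. My goal is then to show that the irreducible characters form a basis of $C(G)$, which gives the desired equality.

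First I would prove the orthogonality relations $\langle \chi_i, \chi_j \rangle = \delta_{ij}$ for the characters $\chi_1, \ldots, \chi_r$ of the pairwise non-isomorphic irreducible representations, where $\langle f_1, f_2 \rangle = \frac{1}{|G|}\sum_{g\in G} f_1(g)\overline{f_2(g)}$. The technical engine is Schur's lemma: for any linear map $\varphi\colon V_j \to V_i$, the average $\tilde\varphi = \frac{1}{|G|}\sum_{g\in G} \pi_i(g)\,\varphi\,\pi_j(g^{-1})$ is $G$-equivariant, hence zero when $i\neq j$ and a scalar when $i=j$. Taking $\varphi$ to be a matrix unit in fixed bases and summing over entries yields the orthonormality of the characters. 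In particular, the $\chi_i$ are linearly independent in $C(G)$, giving $r \le \#\{\text{conjugacy classes}\}$.

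For the reverse inequality I would show that any $f\in C(G)$ orthogonal to every $\chi_i$ must vanish. Given such $f$, form the operator $T_f^{(\pi)} = \sum_{g \in G} \overline{f(g)}\,\pi(g)$ on each irreducible $(\pi,V_\pi)$. Because $f$ is a class function, $T_f^{(\pi)}$ commutes with $\pi(G)$, so Schur's lemma forces it to be a scalar; taking its trace and invoking $\langle f, \chi_\pi\rangle = 0$ shows that scalar is $0$. Since every irreducible of $G$ appears in the regular representation, $T_f$ then annihilates $\mathbb{C}[G]$ and evaluating at the identity element forces $f \equiv 0$. Thus the irreducible characters span $C(G)$, so $r = \dim_{\mathbb{C}} C(G)$, and both counts agree.

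The main obstacle is the orthogonality step: converting the abstract Schur's lemma calculation into the scalar identity $\langle \chi_i, \chi_j\rangle = \delta_{ij}$ via explicit matrix coefficients is the only computational part, and everything downstream is formal. As a sanity check one could also run the argument through the Artin--Wedderburn decomposition $\mathbb{C}[G] \cong \bigoplus_i M_{n_i}(\mathbb{C})$, comparing the dimensions of the centers on each side: the centre of $M_{n_i}(\mathbb{C})$ contributes one dimension per irreducible, while a basis of $Z(\mathbb{C}[G])$ is given by the conjugacy class sums, matching the two counts directly.
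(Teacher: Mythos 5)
Your proof is correct and complete: the orthonormality of irreducible characters via Schur's lemma and averaging, followed by the argument that a class function orthogonal to all irreducible characters acts as zero on the regular representation and hence vanishes, is the standard proof that the irreducible characters form a basis of the space of class functions. The paper itself states this result without proof, as a well-known preliminary with a pointer to Serre's book, and your argument is essentially the one given there (your Artin--Wedderburn remark, comparing $\dim Z(\mathbb{C}[G])$ computed via class sums with the centres of the matrix blocks, is an equally valid alternative route).
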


\begin{theorem}
Group $G$ is Abelian if and only if all irreducible representations have degree $1$.
\end{theorem}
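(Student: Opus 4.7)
The plan is to prove the two directions separately, using Schur's lemma for the forward implication and the standard regular-representation dimension identity together with the previously stated theorem relating irreducible representations to conjugacy classes for the reverse implication.

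For the forward direction, I would assume $G$ is abelian and let $(\varphi, V)$ be an irreducible representation. For each fixed $g \in G$, the linear map $\varphi(g) : V \to V$ commutes with $\varphi(h)$ for every $h \in G$, because $gh = hg$ in $G$. Hence $\varphi(g)$ is a $G$-equivariant endomorphism of the irreducible representation $V$, and by Schur's lemma it must be a scalar multiple of the identity, $\varphi(g) = \lambda_g \, \mathrm{id}_V$ for some $\lambda_g \in \mathbb{C}$. Consequently every one-dimensional subspace of $V$ is $G$-invariant, so irreducibility forces $\dim_{\mathbb{C}} V = 1$.

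For the reverse direction, I would appeal to the classical identity $|G| = \sum_{i} d_i^2$, where $d_1, \ldots, d_r$ are the dimensions of the distinct irreducible complex representations of $G$ (this arises from decomposing the regular representation into isotypic components, each appearing with multiplicity equal to its dimension). If every $d_i$ equals $1$, the identity collapses to $|G| = r$. By the theorem already stated in the preliminaries, $r$ also equals the number of conjugacy classes of $G$. Thus $G$ has exactly $|G|$ conjugacy classes, meaning each conjugacy class is a singleton; equivalently, every element of $G$ commutes with every other, so $G$ is abelian.

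The main obstacle is really one of citation rather than genuine difficulty: the argument leans on two results not recorded in the preliminaries, namely Schur's lemma and the sum-of-squares identity $|G| = \sum_i d_i^2$. I would either quote these directly from the cited references (Serre in particular) or include a brief derivation of the dimension formula via the decomposition of the regular representation before finishing the reverse implication.
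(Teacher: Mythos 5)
Your proof is correct and is the standard argument for this equivalence. Note, however, that the paper itself does not prove this statement: it is quoted in the Preliminaries as a known background result (with references to Serre and Kao), so there is no proof in the paper to compare against. One small correction to your closing remark on citations: the sum-of-squares identity $|G|=\sum_i d_i^2$ \emph{is} recorded in the preliminaries --- it is the theorem stated immediately after this one --- and the count of irreducibles by conjugacy classes is the theorem stated immediately before it, so the only genuinely missing ingredient you would need to import or prove is Schur's lemma for the forward direction.
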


\begin{theorem}
The sum of the squares of the dimensions of distinct irreducible representations is the same as the order of the given group $G$.
\end{theorem}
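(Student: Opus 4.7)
The plan is to proceed via the regular representation and the orthogonality relations for characters, following the standard approach one finds in Serre. First I would introduce the regular representation $\rho_{\mathrm{reg}}\colon G \to GL(\mathbb{C}[G])$, where $\mathbb{C}[G]$ is the complex vector space with basis $\{e_g : g\in G\}$ and $\rho_{\mathrm{reg}}(h)\,e_g = e_{hg}$. A direct computation gives its character: $\chi_{\mathrm{reg}}(e) = |G|$ and $\chi_{\mathrm{reg}}(g)=0$ for $g\neq e$, because the permutation $h\mapsto hg$ has fixed points only when $h$ is the identity (this is also consistent with the Fixed-Point Formula stated as Theorem~\ref{fpf}).

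Next I would bring in, as an auxiliary tool, the character orthogonality relations: if $V_1,\dots,V_k$ are pairwise non-isomorphic irreducible representations of $G$ with characters $\chi_1,\dots,\chi_k$, then $\langle \chi_i,\chi_j\rangle = \frac{1}{|G|}\sum_{g\in G} \chi_i(g)\overline{\chi_j(g)} = \delta_{ij}$. These follow from Schur's lemma applied to the averaging projector $\frac{1}{|G|}\sum_g \rho(g)$ onto $G$-invariants of $\mathrm{Hom}(V_i,V_j)$. Because every representation is completely reducible (Maschke), any representation $V$ decomposes as $V \cong \bigoplus_i V_i^{\,m_i}$ with $m_i = \langle \chi_V,\chi_i\rangle$.

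Applying this to $V = \mathbb{C}[G]$, the multiplicity of $V_i$ in the regular representation is
\[
m_i \;=\; \langle \chi_{\mathrm{reg}},\chi_i\rangle \;=\; \frac{1}{|G|}\sum_{g\in G}\chi_{\mathrm{reg}}(g)\,\overline{\chi_i(g)} \;=\; \frac{1}{|G|}\cdot |G|\cdot \overline{\chi_i(e)} \;=\; \dim V_i,
\]
since only $g=e$ contributes and $\chi_i(e) = \dim V_i$ is a positive integer. Taking dimensions of the decomposition $\mathbb{C}[G] \cong \bigoplus_i V_i^{\,\dim V_i}$ yields $|G| = \sum_i (\dim V_i)^2$, which is the claim, provided one knows that the $V_i$ exhausted all irreducibles. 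For exhaustiveness, note that any irreducible $W$ must occur in $\mathbb{C}[G]$ (fix a nonzero $w\in W$; the $G$-equivariant map $\mathbb{C}[G]\to W$, $e_g\mapsto g\cdot w$, is surjective), so $W$ appears among the $V_i$.

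The main obstacle is the character orthogonality, which is the substantive representation-theoretic input; the rest is bookkeeping with the regular representation. A reasonable alternative route is to first prove that the irreducible characters form an orthonormal basis for the space of class functions, recovering the equality $|G| = \sum_i (\dim V_i)^2$ by evaluating the class-function identity $\frac{1}{|G|}\sum_i \dim V_i \cdot \chi_i(g) = \delta_{g,e}$ at $g=e$; I would only pursue this alternative if one wanted the full orthonormal-basis statement as a byproduct, but for the stated theorem the regular-representation argument above is the shortest path.
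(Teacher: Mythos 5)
The paper does not prove this statement: it appears in Section~\ref{background} as one of several background facts quoted without proof from the cited references (Serre, Kao). Your argument via the regular representation is correct and complete --- the character computation $\chi_{\mathrm{reg}}(e)=|G|$, $\chi_{\mathrm{reg}}(g)=0$ for $g\neq e$, the multiplicity calculation $m_i=\langle\chi_{\mathrm{reg}},\chi_i\rangle=\dim V_i$, and the exhaustiveness step (every irreducible occurs in $\mathbb{C}[G]$ via the surjection $e_g\mapsto g\cdot w$) are all the right ingredients, and the last of these is a point often glossed over, so it is good that you addressed it explicitly. This is exactly the standard proof found in the paper's own reference, so there is nothing to compare beyond noting that you have supplied a proof where the paper supplies a citation; the only substantive input you take as given is character orthogonality, which is a reasonable black box at the level at which the paper itself operates (it already quotes the orthogonality-based Theorem~\ref{teo9} without proof).
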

\newpage
\begin{theorem} \label{teo9} Let $V$ be a linear representation of $G$ with~character $\phi$, and~suppose $V$ decomposes into a direct sum of
irreducible representations $V=W_1\oplus \ldots \oplus W_k$. Then, if~$W$ is an irreducible representation with character $\chi$,
the number of $W_i$ isomorphic to $W$ is equal to the scalar product
$(\phi|\chi)$, where
\begin{displaymath}
       (\phi|\chi)=\frac{1}{|G|}\sum_{t\in G}\phi(t)\chi(t).
\end{displaymath}
\end{theorem}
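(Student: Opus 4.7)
The plan is to reduce the statement to the character orthogonality relations for irreducible representations. First I would note that the character is additive with respect to direct sums: if $V = W_1 \oplus \cdots \oplus W_k$ and $\chi_i$ denotes the character of $W_i$, then for every $t \in G$ the matrix of $\varphi(t)$ in a basis adapted to the decomposition is block diagonal, so $\phi(t) = \chi_1(t) + \cdots + \chi_k(t)$. Consequently, by linearity of the inner product,
\begin{displaymath}
(\phi | \chi) = \sum_{i=1}^{k} (\chi_i | \chi).
\end{displaymath}

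Next I would invoke the orthogonality relation for irreducible characters: if $\chi_i$ is the character of an irreducible representation $W_i$ and $\chi$ is the character of another irreducible representation $W$, then $(\chi_i | \chi) = 1$ when $W_i \cong W$ and $(\chi_i | \chi) = 0$ otherwise. Plugging this into the sum above, every $W_i$ isomorphic to $W$ contributes $1$ and every other summand vanishes, which gives exactly the multiplicity of $W$ in the decomposition. This is the whole content of the theorem once orthogonality is in hand.

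The hard part is establishing the orthogonality relation itself. My plan here is the standard Schur-lemma route. Given irreducible representations $(\rho, U)$ and $(\rho', U')$ and an arbitrary linear map $h : U \to U'$, one forms the averaged operator
\begin{displaymath}
T_h = \frac{1}{|G|} \sum_{g \in G} \rho'(g) \, h \, \rho(g^{-1}),
\end{displaymath}
which is $G$-equivariant. Schur's lemma forces $T_h = 0$ when $U \not\cong U'$ and $T_h = \frac{\mathrm{tr}(h)}{\dim U} \, \mathrm{Id}_U$ when $U = U'$. Writing this identity in matrix form and specialising $h$ to the elementary matrices $E_{rs}$ yields the orthogonality of matrix coefficients; summing the diagonal entries in the cases $r = s$ converts matrix-coefficient orthogonality into character orthogonality.

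The main obstacle, as expected, is the Schur-lemma step, since one has to justify both the complete reducibility used implicitly in stating the decomposition $V = W_1 \oplus \cdots \oplus W_k$ (Maschke's theorem, available because we work over $\mathbb{C}$ and $G$ is finite) and the two cases of Schur's lemma. Everything after that is bookkeeping: additivity of traces, bilinearity of the pairing $(\cdot | \cdot)$, and counting. The only subtlety worth flagging is that the decomposition into irreducibles is not canonical, but the \emph{multiplicities} are, and the scalar product $(\phi | \chi)$ depends only on $\phi$ and $\chi$; this is precisely what makes the formula a well-defined statement about the isomorphism class of $V$.
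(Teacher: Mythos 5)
The paper does not prove this statement: it appears in Section~\ref{background} as one of several quoted ``well-known theorems'' from representation theory, with the reader referred to Serre and Kao. So there is no in-paper argument to compare yours against. Your proposal is the standard textbook proof (essentially the one in Serre, Chapter~2): additivity of characters over direct sums, bilinearity of the pairing, and the first orthogonality relation, itself derived by averaging an arbitrary linear map over $G$ and applying Schur's lemma to the resulting intertwiner. That chain of reasoning is complete and correct, and your closing remark that the multiplicities are well defined even though the decomposition is not canonical is exactly the right point to flag. One small caveat worth noting: the scalar product as printed in the statement, $(\phi|\chi)=\frac{1}{|G|}\sum_{t}\phi(t)\chi(t)$, omits the complex conjugate (or equivalently the substitution $\chi(t^{-1})$) that appears in Serre's definition; the orthogonality relation you invoke, $(\chi_i|\chi)=\delta_{W_i\cong W}$, holds for the Hermitian form $\frac{1}{|G|}\sum_t\chi_i(t)\overline{\chi(t)}$, whereas the unconjugated form literally counts copies of the dual $W^*$. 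Your proof is the correct one for the intended statement; the discrepancy lies in the paper's formula, not in your argument.
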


Next, we state some well-known theorems in algebraic topology. $K$ below is a finite simplicial~complex.

\begin{theorem} $H_0(K)$ is a free Abelian group whose rank is the number of connected components of~$|K|$. \end{theorem}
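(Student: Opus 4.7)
The plan is to unwind the definition of $H_0(K)$ directly from the simplicial chain complex and match the quotient with a free abelian group indexed by the connected components of $|K|$. Since the boundary map $\partial_0 : C_0(K) \to 0$ is zero, we have $H_0(K) = C_0(K)/\operatorname{im}(\partial_1)$, and $C_0(K)$ is already free abelian on the vertex set $V(K)$. So the task reduces to understanding exactly which relations are imposed by $\operatorname{im}(\partial_1)$.

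First I would observe that $\operatorname{im}(\partial_1)$ is generated by elements of the form $w - v$, one for each $1$-simplex $[v,w]$ of $K$. Hence, in $H_0(K)$, two vertices are identified precisely when they can be connected by a finite sequence of edges of $K$, i.e. by a $1$-chain path in the $1$-skeleton. This is the combinatorial equivalence relation generated by adjacency in $K^{(1)}$.

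Next I would show that this combinatorial equivalence relation coincides with the partition of $V(K)$ induced by the path-components of $|K|$. One direction is immediate, since an edge-path in $K^{(1)}$ gives a continuous path in $|K|$. For the other direction, the key lemma is that any continuous path in $|K|$ joining two vertices can be replaced (using compactness of the path and local conicality of $|K|$ on each simplex) by an edge-path in $K^{(1)}$; equivalently, $|K|$ is path-connected iff $K^{(1)}$ is, and the path-components of $|K|$ and its (ordinary) connected components agree for finite simplicial complexes. Picking one base vertex $v_j$ in each of the $c$ components, every vertex is therefore equivalent in $H_0(K)$ to exactly one $v_j$, which yields a surjection $H_0(K) \twoheadrightarrow \mathbb{Z}^c$ sending $[v_j]$ to the $j$th generator.

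The remaining step is injectivity, i.e. showing that $[v_1], \dots, [v_c]$ are $\mathbb{Z}$-linearly independent in $H_0(K)$. The cleanest way is to define, for each component $C_j$, the augmentation-type homomorphism $\varepsilon_j : C_0(K) \to \mathbb{Z}$ sending a vertex to $1$ if it lies in $C_j$ and to $0$ otherwise. Since both endpoints of any edge lie in the same component, $\varepsilon_j$ vanishes on $\operatorname{im}(\partial_1)$ and descends to $H_0(K)$, with $\varepsilon_j([v_i]) = \delta_{ij}$; assembling these gives a left inverse to the surjection above. The main obstacle is the topological lemma identifying combinatorial edge-connectedness with path-connectedness of $|K|$; once that dictionary is in place, the rest is a routine chain-level computation.
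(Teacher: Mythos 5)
Your argument is correct and complete in outline: reducing $H_0(K)$ to $C_0(K)/\operatorname{im}(\partial_1)$, identifying the induced equivalence on vertices with edge-path connectivity, matching that with the components of $|K|$, and using the component-wise augmentation maps $\varepsilon_j$ to establish freeness is exactly the standard proof. Note that the paper itself offers no proof of this statement --- it is quoted as a known background theorem from the cited references (Armstrong, Munkres) --- and your argument is essentially the one found there, so there is nothing to contrast. The only point worth tightening if you wrote this out in full is the topological lemma: rather than deforming an arbitrary continuous path, it is cleaner to observe that the subcomplexes spanned by the edge-path equivalence classes have polyhedra that are closed, pairwise disjoint, finite in number (hence each is also open), and path-connected, which forces them to be precisely the components of $|K|$.
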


\begin{theorem} If $|K|$ is connected, Abelianizing its fundamental group gives the first homology group of $K$. \end{theorem}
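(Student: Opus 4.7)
The plan is to establish the classical Hurewicz isomorphism in dimension one for the simplicial complex $K$. I would construct a Hurewicz homomorphism $h:\pi_1(|K|,v_0)\to H_1(K)$, show it is surjective, and identify its kernel as the commutator subgroup, so that $h$ descends to an isomorphism on the abelianization.

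First I would fix a base vertex $v_0\in K^{(0)}$ and a maximal tree $T\subset K^{(1)}$, and represent every class of $\pi_1(|K|,v_0)$ by an edge-loop $e_{i_1}e_{i_2}\cdots e_{i_n}$ via the edge-path group description of the fundamental group of a simplicial complex. Define $h$ on such an edge-loop by the homology class of the $1$-chain $e_{i_1}+e_{i_2}+\cdots+e_{i_n}$, with signs tracking orientation. Well-definedness on homotopy classes reduces to two checks: back-and-forth cancellations along a single edge correspond to adding $e-e=0$ in $C_1(K)$, and replacing two sides of a $2$-simplex $\Delta$ by the third side corresponds to modifying the chain by $\pm\partial\Delta$, which is a boundary. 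Concatenation of edge-loops clearly corresponds to addition of the associated chains, so $h$ is a group homomorphism, and since $H_1(K)$ is abelian it factors through $\bar h:\pi_1(|K|,v_0)^{\mathrm{ab}}\to H_1(K)$.

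For surjectivity I would take any $1$-cycle $z=\sum n_i e_i$ and, using the tree $T$, lift each edge $e_i\notin T$ to a based loop $\gamma_i$ by prepending and appending the unique $T$-paths joining $v_0$ to the endpoints of $e_i$. The condition $\partial z=0$ forces all contributions from tree edges in $\sum n_i\gamma_i$ to cancel, exhibiting $[z]=\bar h\bigl(\sum n_i[\gamma_i]\bigr)$ and thus showing $\bar h$ is onto.

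The hard part will be injectivity: one must convert a purely linear identity $c_\gamma=\partial D$ in the free abelian group $C_1(K)$ into a sequence of $\pi_1$-level moves that trivialise $\gamma$ modulo commutators. Given $D=\sum m_j\Delta_j$, each triangular boundary $\partial\Delta_j$ comes from a null-homotopic loop in $|K|$, so the algebraic relation provides the raw material; the combinatorial subtlety is that edge-loops record the order in which edges are traversed while $1$-chains only record multiplicities. I plan to handle this by an inductive reordering argument, applying exactly one triangle relation per summand of $D$ and absorbing the discrepancy between edge-loop order and chain multiplicity into commutators, which is legitimate in $\pi_1(|K|,v_0)^{\mathrm{ab}}$. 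Carrying this through yields $\ker\bar h=0$ and completes the identification of $H_1(K)$ with the abelianisation of $\pi_1(|K|,v_0)$.
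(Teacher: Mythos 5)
The paper does not actually prove this statement: it is quoted in Section~\ref{background} as a standard background theorem (Poincar\'e's theorem, i.e.\ the simplicial Hurewicz isomorphism in dimension one, available in the cited topology texts), so there is no in-paper argument to compare yours against. Judged on its own terms, your outline follows the classical edge-path route, and the first two thirds are sound: the well-definedness check correctly reduces elementary edge-path homotopies to adding $e-e=0$ or $\pm\partial\Delta$, and the surjectivity argument is complete once you observe that the difference between $z$ and $\sum_{e_i\notin T} n_i c_{\gamma_i}$ is a cycle supported in the tree $T$, hence zero because $Z_1(T)=0$.

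The gap is the injectivity step, which you flag as ``the hard part'' and then only describe as an ``inductive reordering argument.'' As stated, that plan has a real obstruction: the hypothesis $c_\gamma=\partial D$ is an identity in the free abelian group $C_1(K)$, and the $2$-chain $D$ carries no information about \emph{where} in the edge-loop $\gamma$ each triangle relation should be applied, so ``one triangle relation per summand of $D$'' is not an operation you can perform on $\gamma$ without further work. The standard way to close this is to reverse the direction of the argument rather than reorder: define a homomorphism $\psi:Z_1(K)\to\pi_1(|K|,v_0)^{\mathrm{ab}}$ on edges by $e\mapsto[\gamma_e]$ (with $\gamma_e$ the based loop through the tree), check that $\psi$ kills $B_1(K)$ because each $\partial\Delta$ maps to the class of the null-homotopic boundary loop of $\Delta$, and then verify $\psi\circ\bar h=\mathrm{id}$ using the single combinatorial fact that, modulo commutators, any edge-loop equals the product of the elementary loops $\gamma_e$ of its edges (tree edges contributing trivially). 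Equivalently, abelianize the edge-path group presentation of $\pi_1$ (generators the oriented edges, relations the tree edges and the triangle relations) and compare it directly with $Z_1/B_1$. Either of these replaces the unexecuted reordering induction with a finite, checkable computation.
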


\begin{theorem} $H_2(K)\cong\mathbb{Z}$ if $|K|$ is an orientable surface and~is $0$ if it is not. \end{theorem}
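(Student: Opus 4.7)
The plan is to compute $H_2(K) = \ker(\partial_2 : C_2(K) \to C_1(K))$ directly: as a surface, $|K|$ has no $3$-simplices, so $H_2(K)$ is just this kernel. I would first invoke the defining local property of a surface triangulation, that each $1$-simplex is a face of exactly two $2$-simplices. Fix an arbitrary oriented $2$-simplex $\sigma_0$ and propagate its orientation to every adjacent triangle by the rule that the two induced orientations on any shared edge be opposite.

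For a $2$-cycle $z = \sum_\sigma a_\sigma \sigma$, I would examine the coefficient of a shared edge $e$ in $\partial_2 z$: it is a sum $\pm a_{\sigma_1} \pm a_{\sigma_2}$ over the two triangles meeting $e$, and must vanish. This yields a relation $a_{\sigma_1} = \epsilon_e \, a_{\sigma_2}$ with $\epsilon_e \in \{+1, -1\}$ recording whether the propagated orientations agree across $e$. Connectedness of $|K|$ forces connectedness of the dual graph of the triangulation, so every coefficient $a_\sigma$ is determined up to a chain of signs by $a_{\sigma_0}$.

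Now I split into cases. If $|K|$ is orientable, one can choose the initial orientations so that the signs $\epsilon_e$ are globally consistent along every path; then each $a_\sigma$ equals $a_{\sigma_0}$ relative to the chosen global orientation, so $\ker(\partial_2)$ is the infinite cyclic group generated by the fundamental class $[K] = \sum_\sigma \sigma$. If $|K|$ is non-orientable, there exists a closed loop of adjacent triangles along which the product of propagated signs is $-1$; transporting $a_{\sigma_0}$ around that loop yields $a_{\sigma_0} = -a_{\sigma_0}$, which over $\mathbb{Z}$ forces $a_{\sigma_0} = 0$, hence $z = 0$.

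The main obstacle will be making the orientability dichotomy precise in combinatorial terms, namely verifying that topological non-orientability of $|K|$ is equivalent to the existence of a sign-reversing loop in the dual graph with its $\pm 1$ edge labels. This is essentially the reformulation of orientability as the vanishing of a certain $\mathbb{Z}/2$-cocycle on the dual graph, and the bridge between that combinatorial statement and the usual topological definition (such as containing no embedded M\"obius band) is the technical heart of the argument.
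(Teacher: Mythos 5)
This theorem is stated in the paper's Preliminaries as a quoted background fact (standard material from Armstrong's \emph{Basic Topology}); the paper itself gives no proof, so there is nothing to compare against line by line. Judged on its own, your argument is the standard textbook proof and is essentially sound: since $K$ is two-dimensional, $H_2(K)=\ker\partial_2$; the surface condition gives that each edge lies in exactly two triangles, so a $2$-cycle's coefficients are locked together across edges up to signs; connectedness of the dual graph then determines every coefficient from a single one, and the dichotomy between a globally consistent sign assignment and a sign-reversing dual loop yields $\mathbb{Z}$ (generated by the coherently oriented sum of all triangles, which you should explicitly verify is a cycle --- each edge receives one $+1$ and one $-1$) versus $0$ (from $a_{\sigma_0}=-a_{\sigma_0}$ over $\mathbb{Z}$). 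Two caveats. First, the statement as written is only true for \emph{closed, connected} surfaces: a component count or a boundary edge would change the answer, and your proof silently uses both hypotheses (every edge in exactly two triangles; connected dual graph), so they should be made explicit. Second, the step you flag yourself --- that topological (non)orientability of $|K|$ is equivalent to the (non)existence of a coherent orientation of the triangles, i.e.\ to the triviality of the $\pm 1$ labelling on dual loops --- is a genuine piece of work if one starts from a M\"obius-band or manifold-theoretic definition of orientability; in Armstrong's treatment this equivalence is essentially taken as the working definition for triangulated surfaces, which is the cleanest way to close that gap here.
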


\begin{remark} {Examples in Sections~\ref{sec6.2} and \ref{sec6.3} below give
evidence for Theorem \ref{thm2}, which will be proved in
Section~\ref{sec6.4}.}
\end{remark}

\section{Evaluating \textbf{\boldmath{$\chi(X,G)$}} Using the Definition with the Chain
Groups} \label{sec6.2}

 In this section, we evaluate $\chi(X,G)$ for some examples using its
definition with the chain~groups.

\begin{example}$D_3$ acting on the equilateral triangle as the group
of symmetries.\end{example}

{$D_3$ here stands for the dihedral group of degree $3$, or~in other words, the dihedral group of order $6$. It is the group of $6$ symmetry transformations of an equilateral triangle. They are reflections in the axes through $3$ vertices and~also rotations around the center by multiples of $120^\circ$. Let us denote the rotations by $e,\rho, \rho^2$
and the reflections by $\alpha, \beta, \gamma$.} The group $D_3$ has $2$ one-dimensional (trivial and alternating),  and~$1$ two-dimensional (standard) irreducible representations; see~\cite{serre,kao}. Let us call them $\textbf{1},\sigma,\tau$, respectively. For~the trivial representation, $\chi_\textbf{1}(g)=1$ for all $g\in G$. Character values of the alternating representation are given by $\chi_\sigma(g)=1$ for  $g\in A_3$ and~$\chi_\sigma(g)=-1$  for $g\not \in A_3$. Finally, the~$2$ dimensional $\tau$ has character values $\chi_\tau(e)=2$, $\chi_\tau(\alpha)=0$ and $\chi_\tau((123))=-1$.

Here, $K_0$ is the representation space of dimension $3$, which is
the number of $0$-cells. Let $P$ be the representation corresponding
to this. By~the fixed-point formula, Theorem \ref{fpf}, the~character values of this representation are shown in Table~\ref{table2}.

\begin{table}[H]\caption{Character table for the representation $P$.}\label{table2} 

 \setlength{\tabcolsep}{8.65mm}\begin{tabular}{c|cccccc}
\noalign{\hrule height 1.0pt}
 & $e$ & $\rho$ & $\rho^2$ & $\alpha$ & $\beta$ & $\gamma$ \\ \noalign{\hrule height 0.5pt}
   $\chi_P$ & 3 & 0 & 0 & 1 & 1 & 1 \\\noalign{\hrule height 1.0pt}
 \end{tabular}
\end{table}


By Theorem \ref{teo9}, the~number of irreducibles isomorphic to $\textbf{1}$ is $(\chi,
\phi_1)=1$, where $\phi_1$ stands for the character of the trivial representation. Similarly, $(\chi,
\phi_2)=0$ and $(\chi,\phi_3)=1$, where $\phi_2$ and $\phi_3$ are the
characters of $\sigma$ and $\tau$, respectively. Therefore, $P=\textbf{1}\oplus \tau.$

Similarly, $K_1$ is the representation space of dimension $3$.
Let us call this representation $Q$. The~character values for $Q$ are presented in Table~\ref{table3}.

\begin{table}[H]\caption{Character table for the representation $Q$.}\label{table3}

 \setlength{\tabcolsep}{8.03mm} \begin{tabular}{c|cccccc}
 \noalign{\hrule height 1.0pt}
 & $e$ & $\rho$ & $\rho^2$ & $\alpha$ & $\beta$ & $\gamma$ \\ \noalign{\hrule height 0.5pt}
   $\chi_Q$ & 3 & 0 & 0 & $-$1 & $-$1 & $-$1 \\ \noalign{\hrule height 1.0pt}
 \end{tabular}
\end{table}

The scalar products are given by $(\chi,\phi_1) = 0$, $(\chi,\phi_2) = 1$ and
$(\chi,\phi_3)= 1$. Hence, $Q=\sigma\oplus \tau.$

Finally, $K_2$ is a $1$-dimensional representation space, which we shall call $R$. The~character values of $R$ are given in Table~\ref{table4}.

\begin{table}[H]\caption{Character table for the representation $R$.}\label{table4}
\centering
  \setlength{\tabcolsep}{8.03mm}\begin{tabular}{c|cccccc}
  \noalign{\hrule height 1.0pt}
 & $e$ & $\rho$ & $\rho^2$ & $\alpha$ & $\beta$ & $\gamma$ \\  \noalign{\hrule height 0.5pt}
   $\chi_R$ & 1 & 1 & 1 & $-$1 & $-$1 &$ -$1 \\  \noalign{\hrule height 1.0pt}
 \end{tabular}
\end{table}

Here, the~scalar products have values $(\chi,\phi_1)=0$, $(\chi,\phi_2)=1$ and
$(\chi,\phi_3)=0$. Therefore $R=\sigma.$

Hence,
$$\begin{array}{cl}
 \chi(X;G)  & =P\oplus-Q \oplus R=(\textbf{1}\oplus
\tau) -(\rho\oplus \tau)+ (\rho) \\
   & =\textbf{1}.
\end{array}$$

\begin{example}$C_2 \times C_2$ acting on an octahedron as rotations through
$\pi$ degrees about the three axes through opposite vertices.\end{example}

{$C_2$ here represents the cyclic group of order $2$. Thus the Abelian group $C_2 \times C_2$ has four elements, identity element of order $1$, and~the remaining three elements of order $2$. Let us represent these elements by}
$e,\rho_1,\rho_2,\rho_3$, where $\rho_1$ is the axis joining vertices $5$ and $6$, $\rho_2$ is the axis joining vertices $3$ and $4$, and finally $\rho_3$ is the axis joining vertices $1$ and $2$; see Figure~\ref{figocta1}.
\begin{figure}[H]
\includegraphics[width=0.5\textwidth]{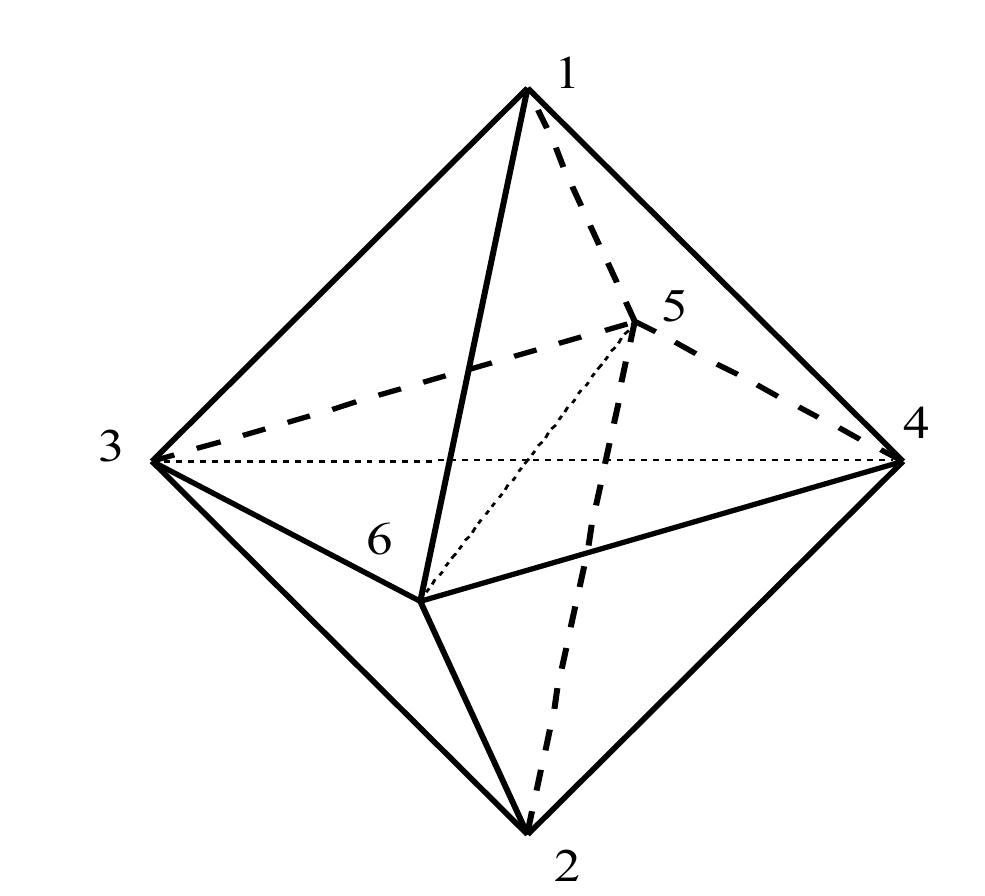}
\caption{$C_2 \times C_2$ acting on the octahedron as rotations by $180^\circ$ about the three axes joining opposite~vertices.} \label{figocta1}
\end{figure}


As this group is Abelian,  all the irreducible representations have a degree of 1. Furthermore,  as~the sum of the squares of dimensions of distinct irreducible representations is on the order of $C_2\times C_2$, there must be $4$ of them. By~\cite{serre,kao}, these are shown in Table~\ref{table1}.


\begin{table}[H]\caption{Character table for $C_2\times C_2$.}\label{table1}

 \setlength{\tabcolsep}{11.7mm}\begin{tabular}{c|cccc}
 \noalign{\hrule height 1.0pt}
 & e & $\rho_1$ & $\rho_2$ & $\rho_3$ \\ \noalign{\hrule height 0.5pt}
   $\chi_\textbf{1}$ & 1 & \,1 & \,1 &\, 1 \\
   $\chi_{T_1}$      & 1 & \,1 & $-$1  & $-$1 \\
   $\chi_{T_2}$      & 1 &$-$1  &\, 1 & $-$1 \\
   $\chi_{T_3}$      & 1 & $-$1  & $-$1  &\, 1 \\ \noalign{\hrule height 1.0pt}
 \end{tabular}
\end{table}


Let $P$ be the representation corresponding to the space $K_0$ of dimension 6. Similar work to the previous example shows that
$P= 3\textbf{1}\oplus T_1\oplus T_2\oplus T_3.$ Next, let
$Q$ be the representation corresponding to the space $K_1$ of~dimension 12. We obtain
$Q=3\textbf{1}\oplus3T_1\oplus3T_2\oplus3T_3.$ Finally, if~$R$ is the representation corresponding to the space $K_2$, of~dimension 8, then
$R=2\textbf{1}\oplus2T_1\oplus2T_2\oplus2T_3.$ Thus, we obtain
\begin{eqnarray*}\chi(X;G)&=&3\textbf{1} \oplus T_1 \oplus T_2 \oplus T_3-
(3\textbf{1} \oplus 3T_1 \oplus 3T_2 \oplus 3T_3)+ (2\textbf{1}
\oplus 2T_1 \oplus 2T_2 \oplus 2T_3)\\
&=&\textbf{1} \oplus \textbf{1}.\end{eqnarray*}

\begin{example}$C_2\times C_2\times C_2$ acting on the octahedron as reflections in the three coordinate planes.\end{example}

{Following from the previous example, $C_2\times C_2\times C_2$ is an Abelian group of order $8$ with~an identity element of order $1$ and~the remaining seven elements of order $2$.} Call the $\alpha$ reflection in the $xy$-plane, $\beta$ reflection in the $yz$-plane, and~$\gamma$ reflection in the $xz$-plane; see Figure~\ref{figocta2}. Hence, the~elements
of $C_2\times C_2\times C_2$ are $e$, $\alpha$, $\beta$, $\gamma$,
$\alpha\circ\beta$, $\alpha\circ\gamma$, $\beta\circ\gamma$,
$\alpha\circ\beta\circ\gamma$ (we will use the notation $\alpha\beta$ for~the element $\alpha\circ\beta$).
Similar calculations to previous examples show that $ \chi(X,G)=V_1\oplus V_8$, where $V_1$ is the trivial representation, and~$V_8$ is the one mapping
$e$, $\alpha\beta$, $\alpha\gamma$, $\beta\gamma$ to $1$ and~the rest of the elements to $-1$.
(The group $C_2\times C_2\times C_2$ is Abelian; therefore, it has $8$ irreducible representations of degree $1$; see Table~\ref{tablenew}  \cite{serre,kao}).

\vspace{-9pt}
\begin{figure}[H]
\includegraphics[width=0.5\textwidth]{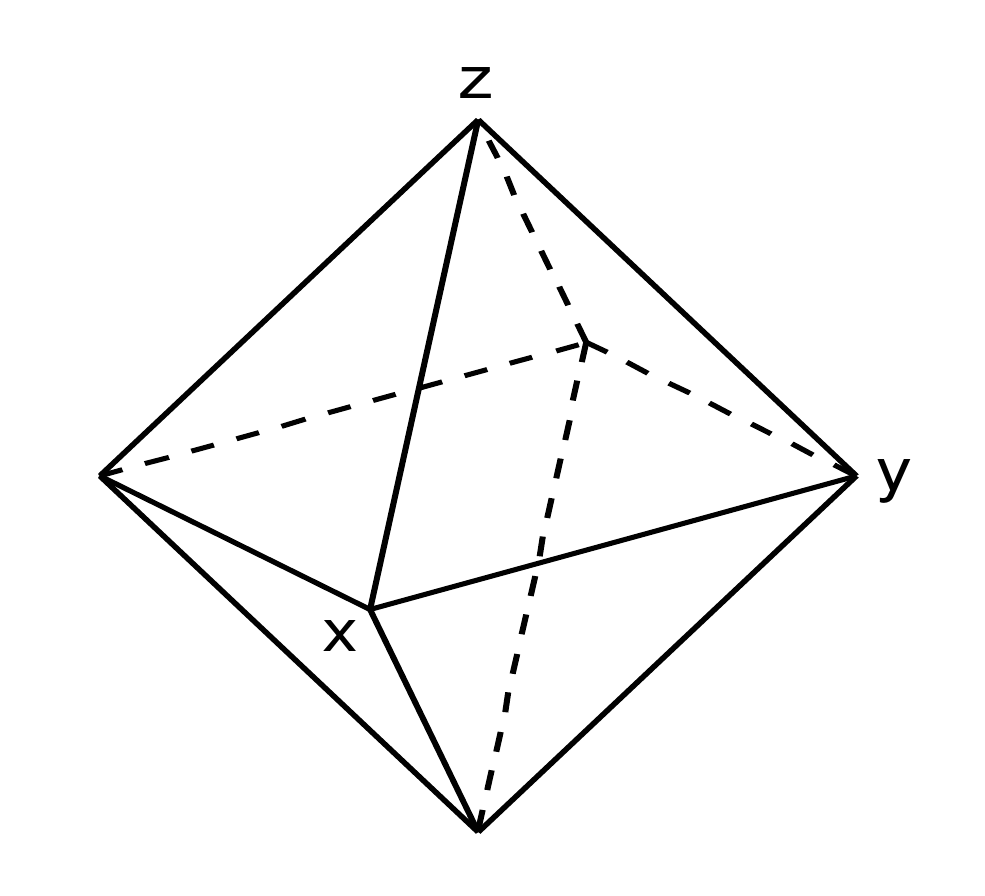}
\caption{$C_2\times C_2\times C_2$ acting on the octahedron. 
}\label{figocta2}
\end{figure}

\begin{table}[H]\caption{Character table for $C_2\times C_2\times C_2$.}\label{tablenew}

 \setlength{\tabcolsep}{5.4mm}\begin{tabular}{c|cccccccc}
 \noalign{\hrule height 1.0pt}
 & e & $\alpha$ & $\beta$ & $\gamma$ & $\alpha\beta$ & $\alpha\gamma$ & $\beta\gamma$ & $\alpha\beta\gamma$  \\ \noalign{\hrule height 0.5pt}
   $\chi_{V_1}$ & 1 & \,1 & \,1 & \,1 & \,1 & \,1 &\, 1 & \,1\\
   $\chi_{V_2}$ & 1 & \,1 & \,1 & $-$1  & \,1 &$-$1  &$-$1  & $-$1 \\
   $\chi_{V_3}$ & 1 & \,1 & $-$1  & \,1 & $-$1  & \,1 & $-$1  & $-$1 \\
   $\chi_{V_4}$ & 1 & \,1 & $-$1  & $-$1  &$ -$1  & $-$1  & \,1 & \,1 \\
   $\chi_{V_5}$ & 1 & $-$1  & \,1 & \,1 & $-$1  & $-$1  & \,1 &$ -$1\\
   $\chi_{V_6}$ & 1 & $-$1  & \,1 & $-$1  &$-$1  & \,1 & $-$1  & \,1 \\
   $\chi_{V_7}$ & 1 & $-$1  & $-$1  & \,1 & \,1 & $-$1  & $-$1  & \,1 \\
   $\chi_{V_8}$ & 1 & $-$1  & $-$1  & $-$1  & \,1 & \,1 & \,1 & $-$1 \\ \noalign{\hrule height 1.0pt}
 \end{tabular}
\end{table}
\unskip

\section{Evaluating the \textbf{\boldmath{$\chi(X,G)$}} Using the Definition with the Homology
Groups} \label{sec6.3}

In this section,  we will evaluate $\chi(X,G)$ for the examples above, this time by using its definition with the homology~groups.


\begin{example}
The group $D_3$ acting on the equilateral triangle as the
group of symmetries.\end{example} Irreducible representations are $\textbf{1}$,
$\sigma$, $\tau$. We use the~formulae

$$\chi(X,G) = \sum_{i=0}^n(-1)^i [H_i(X)].$$

No transformation affects connectedness; thus group $H_0\cong\mathbb{C}$, with~dimension $1$,
contributes $\textbf{1}$ to $[H_0]$. The~group $H_1=0$ with dimension $0$; therefore, there is no contribution to $[H_1]$.
Finally, the second homology group is also $0$; thus, there is no contribution to
$[H_2]$ either. Hence, $\chi(X,G)=\textbf{1}.$

\begin{example}The group $C_2 \times C_2$ acting on the octahedron as the
group of rotations. \end{example} Irreducible representations are
$\textbf{1},T_1,T_2,T_3$.

For the group $H_0\cong\mathbb{C}$ of~dimension $1$, as in the previous example, the~contribution to $[H_0]$ is $\textbf{1}$. The~first homology group is $0$, so there is no contribution to $[H_1]$. Finally, $H_2\cong\mathbb{C}$,
generated by the sum of all the triangles in the triangulation of
the octahedron. If~we call this representation $P$, again, by~the fixed-point formula, the~character values
for $P$ are given by $\chi_P(e)=1, \chi_P(\rho_1)=1, \chi_P(\rho_2)=1, \chi_P(\rho_3)=1$, so $H_2$ contributes $\textbf{1}$ to $[H_2]$. Hence,
$\chi(X,G)=\sum_{i=0}^n(-1)^i [H_i(X;\mathbb{C})]=\textbf{1}\oplus
\textbf{1}.$

\begin{example}The group $C_2\times C_2\times C_2$ acting as
reflections of the octahedron in the three coordinate planes.\end{example}
The group $H_0$ is isomorphic to $\mathbb{C}$ contributing
$\textbf{1}$ to $[H_0]$. $H_1\cong0$, so there is no contribution to
$[H_1]$, and~finally, $H_2\cong\mathbb{C}$ with dimension $1$. If~$P$
is the representation corresponding to this, then $P$ has character values $1$ and $-1$ for the rotations and the reflections, respectively.
As stated earlier, this representation is $V_8$. Hence,
$\chi(X;G)=\textbf{1}\oplus V_8.$

\section{Results}\label{sec6.4}

\begin{theorem}\label{thm2} As elements of the representation ring,
$$\sum_{i=0}^n(-1)^i [K_i(X;\mathbb{C})] = \sum_{i=0}^n(-1)^i
[H_i(X;\mathbb{C})]$$
 for a general case.
 \end{theorem}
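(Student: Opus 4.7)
The plan is to mimic the classical proof, but upgrade every dimension equality to an equality of isomorphism classes in $R_{\mathbb{C}}(G)$. Since $G$ acts cellularly on $X$, each boundary operator $\partial_i:K_i(X;\mathbb{C})\to K_{i-1}(X;\mathbb{C})$ is $G$-equivariant, because a cell is sent to a signed sum of its faces and $G$ permutes the cells preserving incidence. Consequently the subspaces
$$Z_i=\ker\partial_i,\qquad B_i=\mathrm{im}\,\partial_{i+1}\subseteq Z_i\subseteq K_i$$
are $G$-invariant, and the quotient $H_i(X;\mathbb{C})=Z_i/B_i$ inherits a well-defined $G$-action. So all the usual pieces of the chain complex are genuine representations of $G$.

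Next I would extract the two short exact sequences of $G$-representations that appear in the classical argument. The first comes from the first isomorphism theorem applied to $\partial_i$,
\begin{equation*}
0\longrightarrow Z_i\longrightarrow K_i\xrightarrow{\ \partial_i\ } B_{i-1}\longrightarrow 0,
\end{equation*}
and the second is the defining sequence of the homology,
\begin{equation*}
0\longrightarrow B_i\longrightarrow Z_i\longrightarrow H_i\longrightarrow 0.
\end{equation*}
The key point, and the reason the identity holds in the representation ring and not just as an identity of dimensions, is that both sequences split as sequences of $G$-modules. This is Maschke's theorem: since $G$ is finite and $\mathbb{C}$ has characteristic zero, every finite-dimensional representation of $G$ is completely reducible, so any $G$-invariant subspace admits a $G$-invariant complement. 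Hence in $R_{\mathbb{C}}(G)$ we obtain the additive relations
$$[K_i]=[Z_i]+[B_{i-1}],\qquad [Z_i]=[B_i]+[H_i].$$

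Finally I would combine these two relations to eliminate $[Z_i]$, giving
$$[K_i]=[H_i]+[B_i]+[B_{i-1}]\quad\text{in }R_{\mathbb{C}}(G),$$
and then form the alternating sum $\sum_{i=0}^{n}(-1)^i[K_i]$. The $[B_i]$ terms telescope: the contribution of $[B_i]$ from the index $i$ appears with sign $(-1)^i$, while its contribution from the index $i+1$ (as $[B_{(i+1)-1}]$) appears with sign $(-1)^{i+1}$, so every $[B_i]$ cancels, using the boundary conditions $B_{-1}=0$ and $B_n=0$ (the latter because $K_{n+1}=0$). What remains is exactly $\sum_{i=0}^{n}(-1)^i[H_i(X;\mathbb{C})]$, as desired.

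The only genuine subtlety in the argument is the appeal to Maschke's theorem; without complete reducibility one would only be able to conclude an equality of dimensions via the Euler--Poincaré principle, not an equality in $R_{\mathbb{C}}(G)$. Everything else is a verification that the usual chain-level constructions respect the $G$-action, which is immediate from the hypothesis that $G$ acts cellularly.
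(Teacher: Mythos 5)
Your proposal is correct and follows essentially the same route as the paper: the same two short exact sequences $0\to Z_i\to K_i\to B_{i-1}\to 0$ and $0\to B_i\to Z_i\to H_i\to 0$, split to give $[K_i]=[B_i]+[H_i]+[B_{i-1}]$, followed by the same telescoping of the alternating sum with $B_{-1}=B_n=0$. Your explicit appeals to equivariance of the boundary maps and to Maschke's theorem make the splitting step more transparent than the paper's, but the argument is the same.
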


\begin{proof}

Let $Z_n$ be the group of $n$-cycles and $B_n$ be the group of bounding $n$-cycles. There are
two short exact~sequences
$$0\rightarrow Z_{n}\rightarrowtail K_{n}\twoheadrightarrow
B_{n-1}\rightarrow 0$$
$$0\rightarrow B_{n}\rightarrowtail Z_{n}\twoheadrightarrow
H_{n}\rightarrow 0$$
and these sequences split. Thus, we have $K_{n}=Z_{n}\oplus B_{n-1}$
and $Z_{n}=B_n\oplus H_{n}$. This implies that $K_{n}=B_n\oplus
H_{n}\oplus B_{n-1}$. Therefore, $\chi_{K_{n}}=\chi_{B_{n}}+\chi_{H_{n}}+\chi_{B_{n-1}}.$

 By the above equation, we~have

$$\sum_{i=0}^n(-1)^i\chi_{K_{i}}=\sum_{i=0}^n(-1)^i(\chi_{B_{i}}+\chi_{H_{i}}+\chi_{B_{i-1}}).$$

 All $\chi_{B_{i}}$s and $\chi_{B_{i-1}}$s cancel except for
$\chi_{B_{-1}}$ and $\chi_{B_{n}}$, so let us consider these cases
now. We have $\chi_{B_{-1}}=0$ \ since
\ $\chi_{K_0}=\chi_{B_0}+\chi_{H_0}$. Additionally, $B_n=0$, as there are no
\{$n+1$\}-dimensional faces. Hence, $\chi_{B_n}=0$.~Therefore,
$$\chi(X;G):=\sum(-1)^i\chi_{K_i}=\sum(-1)^i\chi_{H_i}$$
which implies that
$$\sum_{i=0}^n(-1)^i
[K_i(X;\mathbb{C})]=\sum_{i=0}^n(-1)^i [H_i(X;\mathbb{C})].$$ \end{proof}

\begin{proposition} The character of $\chi(X;G)$ is equal to the ordinary Euler characteristic of $X^g$ where $X^g = \{ x \in X |\,gx = x
\}$. \end{proposition}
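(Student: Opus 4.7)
The plan is to evaluate the character of the virtual representation $\chi(X;G)$ at an arbitrary element $g\in G$ using the chain-complex definition, and then interpret the resulting alternating sum cell-by-cell as the Euler characteristic of the fixed subcomplex $X^g$. Since Theorem \ref{thm2} already shows that the two definitions of $\chi(X;G)$ coincide in $R_\mathbb{C}(G)$, it is enough to work with the simpler one, $\chi(X;G)=\sum_{i=0}^n(-1)^i[K_i(X;\mathbb{C})]$. By additivity of characters on the representation ring, the character of $\chi(X;G)$ at $g$ is
$$\chi_{\chi(X;G)}(g)=\sum_{i=0}^n(-1)^i\,\chi_{K_i(X;\mathbb{C})}(g).$$

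The next step is to recognize each $K_i(X;\mathbb{C})$ as a permutation representation: its distinguished basis is the set of $i$-cells of $X$, and $G$ acts on this basis through its cellular action on $X$. Here I will invoke the standard convention (arranged by subdividing if necessary) that whenever $g$ sends an $i$-cell to itself it does so pointwise, so that the action really is by permutation of the basis rather than by signed permutation. Under this convention, Theorem \ref{fpf} (the Fixed-Point Formula) applies directly and gives
$$\chi_{K_i(X;\mathbb{C})}(g)=\#\{\,\sigma\text{ an $i$-cell of }X:\,g\sigma=\sigma\,\}.$$

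Now I would observe that the right-hand side is precisely the number of $i$-cells of the fixed-point set $X^g$. Indeed, $X^g$ inherits a cell structure as a subcomplex of $X$: an $i$-cell $\sigma$ of $X$ lies in $X^g$ exactly when $g$ fixes $\sigma$ pointwise, and conversely every cell of $X^g$ arises this way. Substituting back,
$$\chi_{\chi(X;G)}(g)=\sum_{i=0}^n(-1)^i\,\#\{\,i\text{-cells of }X^g\,\}=\chi(X^g),$$
the ordinary Euler characteristic of $X^g$, which is the desired identity.

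The only genuinely delicate point is the orientation convention in the second step: for a generic cellular $G$-action, $g$ might fix an $i$-cell setwise while reversing its orientation, contributing $-1$ instead of $+1$ to $\chi_{K_i}(g)$, and then the Fixed-Point Formula as stated cannot be applied verbatim. The standard way around this is to pass to the barycentric subdivision (which is still $G$-equivariant and does not change the homeomorphism type of $X^g$), after which every cell fixed setwise is automatically fixed pointwise. I would include a short remark to this effect, so that the appeal to Theorem \ref{fpf} is legitimate and the equality with $\chi(X^g)$ holds with its usual meaning.
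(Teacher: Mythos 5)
Your argument is essentially the paper's own proof: expand the character of $\chi(X;G)$ over the chain groups, apply the fixed-point formula to each permutation representation $K_i(X;\mathbb{C})$, and identify the fixed $i$-cells with the $i$-cells of $X^g$. The one difference is that you explicitly flag the setwise-versus-pointwise fixing issue and resolve it by barycentric subdivision, a subtlety the paper passes over in silence; this is a welcome addition rather than a divergence in method.
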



\begin{proof}

Let $\nu$ denote the character of $\chi(X;G)$ and let $\chi$ denote the
ordinary Euler characteristic. We~have
$$\nu(\chi(X;G)(g))=\sum(-1)^i\nu_{K_i}(g).$$

However, $\nu_{K_i}(g)$ is the number of $i$-cells in $X$ fixed by $g$,
and hence, $\sum(-1)^i\nu_{K_i}(g)$ is equal to~$\chi(X^g)$. \end{proof}

\begin{remark}\label{remark}If a finite $G$ acts on any set $S$ and $\sigma\in S$ with
Stab$_G(\sigma)=H$, then the $G$-set \, $G\cdot\sigma$ and $G/H$ are
isomorphic (the proof of the orbit-stabilizer theorem). Therefore,
$\mathbb{C}\{G/H\}$ is isomorphic to the vector space with basis
$G.\sigma$. For~a $G$-$CW$-complex, if~$\sigma$ is an $n$-cell whose
stabilizer is $H$, then the orbit $G.\sigma$ contributes one copy of
$\mathbb{C}\{G/H\}$ to $K_n(X)$. The~induced representation of any subgroup's trivial representation is the permutation representation on its cosets, so
$$\mathbb{C}\{G/H\}\cong\textrm{Ind}_H^G(1_H),$$ and
$\chi_{Ind_H^G(\textbf{1}_H )\,\, (g)}$ is equal to the
number of elements in the orbit $G.\sigma$ of $\sigma$ fixed by $g$.
\end{remark}

\begin{theorem}[Artin's Induction Theorem (weaker version)] Let $G$ be
a group with an irreducible representation of dimension greater than
$1$. Then each character of $G$ is a linear combination with
rational coefficients of characters induced up from characters of
proper subgroups of $G$.
\end{theorem}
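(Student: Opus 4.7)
The plan is to exploit the irreducible representation of dimension at least $2$ geometrically. Let $V$ be an irreducible $G$-representation with $\dim V = n+1 \ge 2$, and let $X = \mathbb{P}(V) \cong \mathbb{CP}^{n}$ carry the induced $G$-action. Since $V$ has no $1$-dimensional $G$-subrepresentation, no line in $V$ is preserved by all of $G$; equivalently, the stabilizer in $G$ of any point of $X$ is a proper subgroup of $G$. This geometric separation of stabilizers is the whole reason for introducing $X$.

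The first step is to endow $X$ with a $G$-equivariant regular CW structure. Such a structure exists because the action of the finite group $G$ on the smooth compact manifold $X$ admits an invariant smooth triangulation, and after a suitable barycentric subdivision we may assume that any cell fixed setwise by an element is fixed pointwise. By Remark~\ref{remark}, each chain-group representation then decomposes as
$$K_i(X;\mathbb{C}) \;\cong\; \bigoplus_{G\cdot \sigma} \mathrm{Ind}_{H_\sigma}^{G}(\mathbf{1}_{H_\sigma}),$$
where the direct sum runs over $G$-orbits of $i$-cells and $H_\sigma$ is the stabilizer of $\sigma$. By the previous paragraph each $H_\sigma$ is proper, so $\sum_i (-1)^i [K_i(X;\mathbb{C})]$ is already a $\mathbb{Z}$-linear combination of characters induced from trivial characters of proper subgroups of $G$.

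The next step is to compute the same virtual representation on the homology side via Theorem~\ref{thm2} and to show it is a nonzero multiple of $\mathbf{1}$. Because the $G$-action on $X$ factors through the path-connected group $GL(V)$, each element $g \in G$ is homotopic to the identity as a self-map of $X$, so $g$ acts trivially on $H_\ast(X;\mathbb{C})$. Combining this with the classical computation $H_i(\mathbb{CP}^n;\mathbb{C}) \cong \mathbb{C}$ for even $i \in [0,2n]$ and $0$ otherwise yields $\sum_i (-1)^i [H_i(X;\mathbb{C})] = (n+1)\mathbf{1}$. Equating the two expressions,
$$(n+1)\mathbf{1} \;=\; \sum_{\text{orbits}} (-1)^{\dim\sigma} \mathrm{Ind}_{H_\sigma}^{G}(\mathbf{1}_{H_\sigma}),$$
so $\mathbf{1}$ is a $\mathbb{Q}$-linear combination of characters induced from proper subgroups (the coefficient $\tfrac{1}{n+1}$ is available precisely because $\dim V > 1$).

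To bootstrap from the trivial character to an arbitrary character $\chi$ of $G$, I would multiply the above identity by $\chi$ and invoke the projection formula $\chi \cdot \mathrm{Ind}_{H}^{G}(\psi) = \mathrm{Ind}_{H}^{G}(\mathrm{Res}_H^G\chi \cdot \psi)$, obtaining
$$\chi \;=\; \frac{1}{n+1}\sum_{\text{orbits}} (-1)^{\dim\sigma}\, \mathrm{Ind}_{H_\sigma}^{G}\bigl(\mathrm{Res}_{H_\sigma}^{G}\chi\bigr),$$
which is the desired expression of $\chi$ as a rational linear combination of characters induced from characters of proper subgroups $H_\sigma$. The main obstacle I anticipate is the equivariant-topology bookkeeping around the chain-level description: one must ensure the cell structure is regular enough for Remark~\ref{remark} to apply without sign twists on setwise-fixed cells, and that the $GL(V)$-connectedness argument is correctly used to force a trivial $G$-action on $H_\ast(\mathbb{CP}^n;\mathbb{C})$. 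Once these foundational points are pinned down, the remaining manipulations inside $R_{\mathbb{C}}(G)$ are formal.
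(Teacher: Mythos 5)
Your proposal is correct and follows essentially the same route as the paper: the action of $G$ on $\mathbb{CP}^n = \mathbb{P}(V)$ via the irreducible $V$ with $\dim V = n+1 > 1$, properness of all point stabilizers, an invariant triangulation, the identification of the chain groups with sums of $\mathrm{Ind}_{H}^{G}(\mathbf{1}_H)$ from Remark~\ref{remark}, Theorem~\ref{thm2} to equate this with $(n+1)\mathbf{1}$, and Frobenius reciprocity to pass from $\mathbf{1}$ to an arbitrary character. The only difference is cosmetic (the paper performs the Frobenius-reciprocity reduction first and you perform it last, and you supply an explicit path-connectedness argument for the triviality of the action on $H_*(\mathbb{CP}^n;\mathbb{C})$, which the paper asserts without proof).
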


\begin{proof}

It suffices to show that the trivial representation \textbf{1} can
be written this way, as~by Frobenius reciprocity~\cite{serre},
$$\textrm{Ind}_H^G(a \otimes \textrm{Res}_H^G(b))=\textrm{Ind}_H^G(a)\otimes b.$$

 Therefore, if
\begin{displaymath}
\textbf{1}=\sum_{H_i,\varphi_i,\lambda_i}\lambda_i
Ind_{H_i}^G(\varphi_i)
\end{displaymath} then
\begin{displaymath}
b=\textbf{1}\otimes b =
\sum_{H_i,\varphi_i,\lambda_i}\lambda_iInd_{H_i}^G(\varphi_i \otimes
Res(b)).
\end{displaymath}

To prove the theorem, we need a space $X$ with the following three properties: Euler characteristic of $X$ must be nonzero, $G$ should act on $X$ with all cell stabilizers' proper subgroups, and~the action of $G$ on
$H_*(X)$ must be trivial. $\mathbb{C}P^n$, the~complex projective $n$-space, is in fact the space we need.
Given $b:G\rightarrow GL(V)$, an~$(n+1)$-dimensional complex
representation of $G$, we get an action of $G$ on $\mathbb{C}P^n$
as $g[\underline{v}]=[b(g)\underline{v}]$. Here we use a triangulation of $CP^n$ where the simplices are
preserved by the G-action, \cite{illman}.
Referring to~\cite{munkres} or~\cite{hatcher}, $\mathbb{C}P^n$ is a $CW$ complex of dimension $2n$. In~fact, this space has one open cell in each even dimension $2j$. Therefore,

\begin{displaymath}
 H_*(\mathbb{C}P^n)= \left\{ \begin{array}{ll}
\mathbb{Z} & \textrm{if} \,\, * = 0,2,4,\ldots,2n\\ 0 &
\textrm{otherwise}.
\end{array} \right.
\end{displaymath}

Here, $\chi(\mathbb{C}P^n)=\sum_{q=0}^n(-1)^q\beta_q$,
where $\beta_q$ is the rank of the free
Abelian part of $H_q(\mathbb{C}P^n)$, so $\chi(\mathbb{C}P^n)= n+1$,
and hence is~nonzero.

It is easy to see that Stab$_G([\underline{v}])=G$ if and only if
$\{\lambda\underline{v}\mid
\lambda\in\mathbb{C}\}=\langle\underline{v}\rangle$ is a
sub-representation of $G$. Thus, if~$V$ has no $1$-dimensional
summands, then each Stab$_G([\underline{v}])$ is a proper subgroup. Thus, $\mathbb{C}P^n$ satisfies all the three properties we have
listed above. Then Theorem \ref{thm2} gives
\begin{displaymath}
\chi(X)\cdot \textbf{1}=\sum_i(-1)^iK_i(X)
\end{displaymath}
and
\begin{displaymath}
K_i(X)\cong\oplus_j\mathbb{C}[G/{\operatorname{Stab}}_G(\sigma_j)]
\end{displaymath}
where $j$ ranges over the orbit representatives $\sigma_j$ of
$i$-cells. Hence, the theorem is proved by Remark \ref{remark}. \end{proof} 

\begin{corollary} If $G$ is non-Abelian, then it has an irreducible representation
of dimension greater than $1$, and~hence satisfies the theorem
above. \end{corollary}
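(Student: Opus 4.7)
The plan is to derive the corollary as an immediate two-step consequence of results already recorded earlier in the paper. The statement really has two parts: first, producing an irreducible representation of dimension greater than one; and second, invoking the weaker Artin induction theorem just proved. Both parts are quotations of earlier results, so the argument amounts essentially to chaining them.

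For the first part I would argue by contrapositive of the theorem cited in Section~\ref{background} stating that a finite group $G$ is Abelian if and only if every irreducible representation of $G$ has degree one. If every irreducible representation of $G$ had dimension one, then by that theorem $G$ would be Abelian; since $G$ is non-Abelian by hypothesis, at least one irreducible representation must have dimension strictly greater than one. This gives the first assertion of the corollary.

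For the second part I would simply observe that the hypothesis of the preceding theorem (the weaker version of Artin's induction theorem) is now verified for $G$, and therefore its conclusion applies verbatim: each character of $G$ is a rational linear combination of characters induced up from characters of proper subgroups of $G$.

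The main obstacle here is not mathematical at all — the whole proof is a one-line composition of two already-quoted results. The only expository care needed is to name the cited Abelian-characterization theorem explicitly and to make sure the reader sees that the contrapositive is what supplies the hypothesis of the preceding theorem; no new computation, no new construction, and no additional topology is required beyond what Theorem~\ref{thm2} and the preceding proof of the weaker Artin theorem have already produced.
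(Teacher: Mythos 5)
Your proposal is correct and matches the paper's intent exactly: the paper leaves this corollary without an explicit proof precisely because it follows immediately from the stated theorem that $G$ is Abelian if and only if all its irreducible representations have degree $1$, combined with the weaker Artin induction theorem just proved. Your contrapositive argument is the same one-line chaining the paper implicitly relies on.
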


\section{Conclusions}
In this paper, given a finite cell complex $X$ with a finite group $G$ acting cellularly on it, we define a more accurate Euler characteristic for the finite cell complex with this action as alternate sums of the chain complexes and the homology groups considered as elements of the representation ring. We prove that both of the definitions are equivalent and that the character of this virtual representation with respect to a given element $g$ is just the ordinary Euler characteristic of the fixed point set by this element. Finally we use representation theory to give a topological proof of a weaker version of
Artin's induction theorem. E.~Artin's induction theorem says that any character of a finite group can be expressed as a rational linear combination of characters that are induced from the cyclic subgroups. In~our work we prove a version of this theorem; in~fact, we show that, if~$G$ is a group with an irreducible representation of dimension greater than 1, then each character of $G$ is a rational linear combination of characters induced up from characters of proper subgroups of $G$.\\


\subsection*{Competing interests}

The author declares no conflict of~interest.

\subsection*{Funding}

This research received no external funding.

\subsection*{Acknowledgements}

The author would like to thank Prof. Ian Leary for his ideas and valuable discussions through the development of this work.

\end{document}